\documentclass[11pt,reqno]{amsart}
\usepackage{amsmath,amssymb,geometry,color,tikz-cd}
\geometry{centering,vcentering,marginratio=1:1,vscale=0.75,hscale=0.71}
\usepackage[colorlinks=true, linkcolor=red!80!black, urlcolor=purple, citecolor=blue!70!black]{hyperref}
\usepackage{amssymb}

\usepackage{bbm}
\usepackage{enumitem}
\usepackage{upgreek}
\usepackage{bm}
\usepackage{stmaryrd}
\usepackage{mathrsfs}
\allowdisplaybreaks

\newcommand{\cO}{\mathcal{O}}

\newcommand{\bP}{\mathbb{P}}
\newcommand{\bC}{\mathbb{C}}
\newcommand{\bR}{\mathbb{R}}

\newcommand{\bQ}{\mathbb{Q}}
\newcommand{\bZ}{\mathbb{Z}}

\newcommand{\bG}{\mathbb{G}}

\newcommand{\Bl}{\mathrm{Bl}}

\DeclareMathOperator{\Aut}{Aut}

\DeclareMathOperator{\Fut}{Fut}

\DeclareMathOperator{\vol}{vol}

\DeclareMathOperator{\ord}{ord}

\DeclareMathOperator{\Supp}{Supp}

\newcommand{\PGL}{\mathrm{PGL}}

\newcommand{\Hom}{\mathrm{Hom}}
\newcommand{\bT}{\mathbb{T}}

\newcommand{\oS}{\overline{S}}
\newcommand{\oH}{\overline{H}}
\newcommand{\oT}{\overline{T}}

\numberwithin{equation}{section}

\newtheorem{prop} {Proposition} [section]
\newtheorem{thm}[prop] {Theorem} 

\newtheorem{lem}[prop] {Lemma}
\newtheorem{cor}[prop]{Corollary}
\newtheorem{prop-def}[prop]{Proposition-Definition}

\newtheorem{thm-defn}[prop]{Theorem-Definition}

\theoremstyle{definition}
 
\newtheorem{rem}[prop] {Remark}

\title{K-stability of Fano threefolds of rank $2$ and degree $14$ as double covers}

\author{Yuchen Liu}
\address{Department of Mathematics, Northwestern University, Evanston, IL 60208, USA.}
\email{yuchenl@northwestern.edu}

\date{\today} 


\begin{document}
\begin{abstract}
We prove that every smooth Fano threefold from the family \textnumero2.8 is K-stable. Such a Fano threefold is a double cover of the blow-up of $\bP^3$ at one point branched along an anti-canonical divisor.
\end{abstract}

\maketitle

\section{Introduction}
Every smooth Fano threefold belongs to one of the $105$ families according to the Iskovskikh-Mori-Mukai classification, see \cite{IP99, MM03} for the complete list and labeling of the families. The celebrated Calabi problem arised from differential geometry asks to find K\"ahler-Einstein metrics on Fano manifolds. From the solutions to the Yau-Tian-Donaldson Conjecture \cite{CDS15, Tia15}, the Calabi problem reduces to checking the algebraic condition, namely K-polystability, for Fano manifolds. Recently there has been much progress on the study of K-stability especially from an algebraic point of view, see \cite{Xu21} for a survey on this topic. Notably, the Calabi problem for a general Fano threefold in each of the 105 families has been solved by Araujo et al.\ in \cite{ACC+}. Nevertheless, there are still many families of Fano threefolds where the Calabi problem remains open for all smooth members. 

In this short note, we prove that every smooth member of the family \textnumero2.8 is K-stable. From now on, let $X$ be a smooth Fano threefold from the family \textnumero2.8. Then $X$ has Picard rank $2$ and degree $14$.
Let $\pi:Y=\Bl_p \bP^3\to \bP^3$ be the blow-up of $\bP^3$ at a point $p$ where $E\subset Y$ is the exceptional divisor of $\pi$. Then $X$ is a double cover $\sigma: X\to Y$ branched along a smooth anti-canonical divisor $\oS\subset Y$.  According to \cite[Section 5.1]{ACC+}, a general member of the family \textnumero2.8 is K-stable.

\begin{thm}\label{thm:main}
Every smooth Fano threefold $X$ from the family \textnumero2.8 is K-stable hence admits K\"ahler-Einstein metrics.
\end{thm}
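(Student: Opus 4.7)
I aim to prove the stability threshold $\delta(X) > 1$ by the Abban--Zhuang adjunction, which implies K-stability and hence the existence of a K\"ahler--Einstein metric via \cite{CDS15, Tia15}. By Fujita--Li, the task reduces to verifying $\delta_p(X) > 1$ at every closed point $p \in X$, which I would do by choosing a flag $p \in C \subset S \subset X$ adapted to the location of $p$ and applying the inductive estimate.

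\textbf{Natural flags.} Two surfaces on $X$ serve as excellent first steps in any flag: $T := \sigma^{-1}(E)$ and, for a general hyperplane $H \subset \bP^3$, $S_H := \sigma^{-1}(\pi^{-1}(H))$. Since $\oS|_E \in |\cO_{\bP^2}(2)|$ is a smooth conic, $T$ is a smooth double cover of $\bP^2$ branched along a conic and so $T \cong \bP^1 \times \bP^1$. Similarly $S_H$ is a smooth double cover of $\bP^2$ branched along the smooth plane quartic $\oS \cap H$, hence a del Pezzo surface of degree $2$. The class $-K_X = \sigma^*(2H - E)$ has $(-K_X)^3 = 14$; the divisors $T \sim \sigma^* E$ and $S_H \sim \sigma^* H$ both have pseudoeffective threshold $1$ against $-K_X$, yielding
\[
S_X(T) = \frac{2}{14}\int_0^1 \bigl(8 - (1+t)^3\bigr)\,dt = \frac{17}{28}, \qquad S_X(S_H) = \frac{2}{14}\int_0^1 \bigl((2-t)^3 - 1\bigr)\,dt = \frac{11}{28}.
\]
Crucially, both restrictions $(-K_X - tT)|_T = (1+t)(H_1+H_2)$ on $T \cong \bP^1 \times \bP^1$ and $(-K_X - tS_H)|_{S_H} = (2-t)(-K_{S_H})$ on the del Pezzo $S_H$ stay ample for every $t \in [0,1]$, so Zariski decomposition at the surface stage is trivial and Abban--Zhuang collapses to explicit one-variable integrals.

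\textbf{Case analysis.} I would stratify $X$ as $\{p \in T\} \cup \{p \notin T\}$. For $p \in T$, take the flag $p \in C \subset T$ with $C$ a ruling of $T$ through $p$. A short computation gives $S(W^T_{\bullet,\bullet};C) = \tfrac{3}{14}\int_0^1 (1+t)^3\,dt = \tfrac{45}{56}$, so Abban--Zhuang delivers
\[
\delta_p(X) \;\geq\; \min\!\left\{ \tfrac{A_X(T)}{S_X(T)},\ \tfrac{A_T(C)}{S(W^T_{\bullet,\bullet};C)},\ \tfrac{1}{S(W^{T,C}_{\bullet,\bullet,\bullet};p)} \right\} \;=\; \min\!\left\{ \tfrac{28}{17},\ \tfrac{56}{45},\ \tfrac{1}{S(W^{T,C}_{\bullet,\bullet,\bullet};p)} \right\},
\]
and the final term is a three-fold integral over an explicit Okounkov polytope, which I expect to come out safely below $1$. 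For $p \notin T$, take $S = S_H$ with $H$ a general hyperplane through $\pi(\sigma(p))$ and $C$ a suitable smooth rational curve on $S_H$ through $p$ (for instance, a $(-1)$-curve arising from a bitangent of the branch quartic, of which there are $56$ on $S_H$); this yields an analogous three-term Abban--Zhuang bound whose coarsest piece is $A_X(S_H)/S_X(S_H) = 28/11$.

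\textbf{Main obstacle.} The hardest subcase is (ii) when $p$ also lies on the ramification divisor $R = \sigma^{-1}(\oS)$: there the natural candidate curve on $S_H$ may be tangent to the branch quartic inside $H$, making the Zariski decomposition of $(-K_X - tS_H)|_{S_H}$ develop a negative part along a $(-1)$-curve through $p$ and worsening the refined invariant. Resolving this likely requires either letting $H$ vary among hyperplanes through $\pi(\sigma(p))$ to avoid unfavorable incidences, or combining the bound with sharp $\delta$-estimates on degree-$2$ del Pezzo surfaces in the spirit of the techniques developed in \cite{ACC+}. Once every pointwise bound exceeds $1$, Theorem~\ref{thm:main} follows from the valuative criterion and the Yau--Tian--Donaldson theorem \cite{CDS15, Tia15}.
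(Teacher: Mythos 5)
Your proposal is a plan, not a proof, and the places where it stops short are exactly the hard parts. Concretely: (1) For $p\notin T$ you never compute the second and third Abban--Zhuang terms for the flag through the degree-$2$ del Pezzo $S_H$, and you yourself flag the genuinely problematic subcase --- $p$ on the ramification divisor $R=\sigma^{-1}(\oS)$, where the candidate curve may be tangent to the branch quartic and the Zariski decomposition acquires a negative part through $p$ --- as an unresolved ``main obstacle.'' This is not a technicality one can wave at: the analogous pointwise $\delta$-estimates at special points of non-general members are precisely why, as the paper notes, K-stability of all smooth members of the sibling family \textnumero2.7 is still open. (2) Your identification $T\cong\bP^1\times\bP^1$ uses that $\oS\cap E$ is a smooth conic, which holds only in subfamily \textnumero2.8(a); in case \textnumero2.8(b) the intersection is a pair of transversal lines, $T$ is a quadric cone with an $A_1$-point over the node, and your flag computation for $p\in T$ (in particular at the vertex) does not apply as written. (3) Even in the clean case, the final term $S(W^{T,C}_{\bullet,\bullet,\bullet};p)$ is only ``expected'' to be below $1$; a bound of the form $\delta_p(X)\geq\min\{28/17,\,56/45,\,\ast\}$ with $\ast$ uncomputed proves nothing. (To be fair, for $p\in T$ in case (a) with $C$ a ruling and all negative parts vanishing, that term does come out to $45/56<1$; but the burden is on you to verify it, and the $p\notin T$ and case (b) strata are where the method actually strains.)

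It is worth noting that the paper takes a completely different route precisely to avoid this pointwise analysis on $X$: by the cyclic-cover results of Dervan, Liu--Zhuang and Zhuang, K-stability of $X$ reduces to K-stability of the log Fano pair $(Y,\tfrac12\oS)$ on the much simpler variety $Y=\Bl_p\bP^3$; by interpolation it then suffices to prove K-semistability of $(Y,c\oS)$ for a single $c<\tfrac12$, which is done by degenerating $\oS$ to a torus-invariant surface $\oS_0$ and checking the degenerate pair either via the Zhou--Zhuang projective-bundle formula (case (a)) or the complexity-one criterion (case (b)), then invoking openness of K-semistability. If you want to salvage your approach, you would need to carry out the full case analysis of points of $X$ --- including the ramification locus, the singular $T$ in case (b), and the preimage of the exceptional divisor --- with explicit Zariski decompositions at each stage; absent that, the argument is incomplete.
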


Since $\Aut(X)$ is finite by \cite{CPS19}, it suffices to show that $X$ is K-polystable.
From \cite{Der16, LZ22, Zhu20} on K-stability of cyclic covers, we know that to show $X$ is K-polystable it suffices  to show that $(Y, \frac{1}{2}\oS)$ is K-polystable. By \cite[Theorem 2.10]{ADL21} (see also \cite[Corollary 1]{JMR16}) we know that $(Y, (1-\epsilon)\oS)$ is K-stable for $0<\epsilon\ll 1$. Thus using interpolation of K-stability \cite[Proposition 2.13]{ADL19} it suffices to show that $(Y, c\oS)$ is K-semistable for some $c\in (0, \frac{1}{2})$. This is done by finding a suitable special degeneration $(Y, c\oS)\rightsquigarrow (Y,c\oS_0)$, showing (equivariant) K-semistability of $(Y,c\oS_0)$, and then using openness of K-semistability \cite{BLX19, Xu19}.

The smooth members of the family \textnumero2.8 split into two subfamilies: \textnumero2.8(a) where $\oS\cap E$ is a smooth conic curve, and \textnumero2.8(b) where $\oS\cap E$ is a union of two transversal lines. In fact, if $\oS\cap E$ is a double line then $\oS$ cannot be smooth. We shall split the proof into two cases accordingly. 

\begin{rem}
There are two families of smooth Fano threefolds of Picard rank $2$ and degree $14$: \textnumero2.7 and \textnumero2.8. The family \textnumero2.7 are Fano threefolds as blow-up of a smooth quadric hypersurface $Q\subset \bP^4$ at a complete intersection of two divisors in $|\cO_Q(2)|$. It is known from \cite[Section 4.5]{ACC+} that a general member of the family \textnumero2.7 is K-stable. It is not known whether every smooth member of the family \textnumero2.7 is K-stable.
\end{rem}

\subsection*{Notation.} Throughout the paper, we work over $\bC$. We follow the definitions and notation from \cite{Xu21, ACC+}.

\subsection*{Acknowledgements.} I would like to thank Hamid Abban, Ivan Cheltsov, Kento Fujita, Anne-Sophie Kaloghiros, Andrea Petracci, Chenyang Xu, and Ziquan Zhuang for helpful discussions and comments. The author is partially supported by NSF Grant DMS-2148266 (formerly DMS-2001317).

\section{\textnumero2.8($\mathrm{a}$)}\label{sec:2.8a}

Recall that $\pi: Y = \Bl_p \bP^3 \to \bP^3$ is the blow-up of $\bP^3$ at a point $p$ with exceptional divisor $E\subset Y$. Let $\oS\subset Y$ be a smooth anti-canonical surface. Let $\sigma: X\to Y$ be the double cover branched along $\oS$. Every smooth Fano threefold $X$ of the family \textnumero2.8 arises this way. Our goal is to show that $(Y, c\oS)$ is K-semistable for some $c\in (0, \frac{1}{2})$ which would imply the K-stability of $X$. 

Throughout this section, we assume that $X$ belongs to the subfamily \textnumero2.8(a). In this case, the anti-canonical surface $\oS$ of $Y$ satisfies that $\oS\cap E$ is a smooth conic. Denote by $S:=\pi_* \oS$ a quartic surface in $\bP^3$. Then the assumption that $\oS \cap E$ being smooth is equivalent to saying that $S$ has an ordinary double point (equivalently, an $A_1$-singularity) at $p$. Choose a projective coordinate $[x,y,z,w]$ of $\bP^3$ such that $p=[0,0,0,1]$. Then the equation of $S$ is
\[
S=(f_2(x,y,z)w^2 + f_3(x,y,z)w + f_4(x,y,z)=0),
\]
where $f_i$ is a degree $i$ homogeneous polynomial in $(x,y,z)$, and $f_2$ is a quadratic form of full rank. Let $C_0:=(f_2(x,y,z)=0)\subset\bP^2$ be a smooth conic curve. Consider the $1$-PS $\lambda: \bG_m \to \PGL(4)$ given by $\lambda(t)\cdot [x,y,z,w]=[x,y,z,tw]$. Then it is clear that $\lim_{t\to 0} \lambda(t)_* S = S_0$ where 
\[
S_0= (f_2(x,y,z) w^2 = 0).
\]
Let $\oS_0:= \pi_*^{-1} S_0$. Then  $\pi^*\lambda$ induces a special degeneration $(Y,c\oS)\rightsquigarrow (Y, c\oS_0)$ for $c\in (0,1)$.

\begin{prop}\label{prop:2.8a}
The log Fano pair $(Y,\frac{3}{17}\oS_0)$ is K-semistable. 
\end{prop}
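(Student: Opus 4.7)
The plan is to combine equivariant K-stability with the Abban--Zhuang adjunction method, chained over the flag $Y\supset E\supset C_0$. The pair $(Y, c\oS_0)$ carries an action of the connected reductive group $G=\mathrm{SO}(3,\bC)\times\bG_m$: the first factor preserves $f_2$ and fixes $p$, while the $\bG_m$ factor scales the coordinate $w$. By Zhuang's equivariant K-stability criterion, it is enough to prove $\delta_G(Y,c\oS_0)\geq 1$, which only tests $G$-invariant divisorial valuations over $Y$.

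First I would check the $G$-invariant prime divisors on $Y$, which by a standard invariant-theory argument are exactly $E$, $\tilde Q$, and $\tilde H$. Using $-K_Y=4\pi^*H-2E$, $\tilde Q\sim 2\pi^*H-2E$, $\tilde H\sim \pi^*H$, and $-K_Y-c\oS_0\sim (1-c)(-K_Y)$, the $S$-invariants are computed by intersection theory and Zariski decomposition on $Y$ (using $E^3=(\pi^*H)^3=1$ and vanishing mixed terms), giving
\[
S(E)=\tfrac{17(1-c)}{14},\qquad S(\tilde Q)=\tfrac{15(1-c)}{28},\qquad S(\tilde H)=\tfrac{11(1-c)}{14}.
\]
Paired with the log discrepancies $A(E)=1$, $A(\tilde Q)=1-c$, $A(\tilde H)=1-2c$, the choice $c=3/17$ yields $\beta(E)=\beta(\tilde H)=0$ and $\beta(\tilde Q)=\tfrac{13}{34}>0$; in particular $A(E)/S(E)=1$.

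For the remaining $G$-invariant divisorial valuations I would apply Abban--Zhuang at $E$: it reduces the task to showing $\delta_G(E;\Delta_E,W^E_{\bullet,\bullet})\geq 1$, where $\Delta_E=cC_0$ is the restricted boundary ($C_0=\tilde Q\cap E$ being the smooth conic on $E\cong\bP^2$) and $W^E_{\bullet,\bullet}$ is the refined multi-graded linear series with $(-K_Y-c\oS_0-tE)|_E=(2(1-c)+t)h_E$ for $t\in[0,2(1-c)]$, where $h_E$ is the hyperplane class. The crucial simplification is that $\bG_m$ acts trivially on $E$ (since it scales $T_p\bP^3$ with uniform weight), so the effective symmetry reduces to $\mathrm{SO}(3,\bC)$, whose only invariant prime divisor on $\bP^2$ is $C_0$. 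A direct computation using $C_0\sim 2h_E$ gives $S^{W^E}(C_0)=\tfrac{15(1-c)}{28}$, hence $A(C_0)/S^{W^E}(C_0)=\tfrac{28}{15}>1$ at $c=3/17$.

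Applying Abban--Zhuang once more to $C_0\cong\bP^1$ finishes the proof: since $\mathrm{SO}(3,\bC)\cong\mathrm{PGL}(2,\bC)$ acts on $C_0$ as its full automorphism group (hence transitively), there is no nontrivial $G$-invariant divisorial valuation on $C_0$, so $\delta_G(C_0;\Delta_{C_0},W^{C_0}_{\bullet,\bullet})=+\infty\geq 1$ vacuously. Chaining these bounds yields
\[
\delta_G(Y,c\oS_0)\;\geq\;\min\bigl(1,\ \tfrac{28}{15},\ +\infty\bigr)\;=\;1,
\]
as required. The only real obstacle is the upfront intersection-theoretic bookkeeping: identifying the pseudo-effective and nef ranges and computing the $S$-invariants on $Y$ and the refined $S^{W^E}(C_0)$ on $E$; once these are in hand, the equivariance on $E$ and $C_0$ makes the two reductions automatic, and the fact that both $\beta(E)=0$ and $\beta(\tilde H)=0$ at $c=3/17$ (so the value is on the wall of K-(semi)stability) is precisely what allows the chain to close at exactly $1$.
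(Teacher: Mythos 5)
Your strategy (equivariant K-stability for $G=\mathrm{SO}(3,\bC)\times\bG_m$ plus Abban--Zhuang adjunction) is a legitimate alternative to the paper's argument, which instead recognizes $Y$ as the $\bP^1$-bundle $\bP_{\bP^2}(\cO(-1)\oplus\cO)$ and $c\oS_0$ as $\phi^*(cC_0)+2cV_\infty$, and then reads off $\delta(Y,c\oS_0)$ from the Zhou--Zhuang closed formula together with $\delta(\bP^2,cC_0)=1$. Your identification of the three $G$-invariant prime divisors and the numbers you compute are all correct: $S(E)=\tfrac{17(1-c)}{14}$, $S(\tilde H)=\tfrac{11(1-c)}{14}$, $S(\tilde Q)=\tfrac{15(1-c)}{28}$, the log discrepancies $1$, $1-2c$, $1-c$, and the refined invariant $S^{W^E}(E\cap\tilde Q)=\tfrac{15(1-c)}{28}$ all check out, and at $c=\tfrac{3}{17}$ they give $\beta(E)=\beta(\tilde H)=0$, $\beta(\tilde Q)>0$, $A/S^{W^E}=\tfrac{28}{15}$ as you state.

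However, there is a genuine gap in the case analysis. The $G$-invariant irreducible proper subvarieties of $Y$ are: the three divisors $E$, $\tilde Q$, $\tilde H$, and \emph{two} curves, namely the conic $E\cap\tilde Q$ in $E$ and the conic $C_\infty:=\tilde Q\cap\tilde H$ at infinity (there are no $G$-fixed points on $Y$). A $G$-invariant divisorial valuation must be centered at one of these five loci. Your adjunction along the flag $Y\supset E\supset (E\cap\tilde Q)$ only controls valuations whose center is contained in $E$; it says nothing about $G$-invariant valuations centered at $C_\infty$, which is disjoint from $E$ --- for instance $\ord_F$ for $F$ the exceptional divisor of $\Bl_{C_\infty}Y$, or quasi-monomial combinations of $\ord_{\tilde Q}$ and $\ord_{\tilde H}$ along $C_\infty$. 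These cannot be dismissed: since $A(\tilde H)/S(\tilde H)=1$ exactly at $c=\tfrac{3}{17}$, the pair sits on the semistability wall near $\tilde H$, so valuations centered inside $\tilde H$ are precisely the delicate ones. To close the argument you would need a second adjunction, along $\tilde H$ (or along $\tilde Q$, where $A/S=\tfrac{28}{15}>1$ gives more room), with its own refinement computation $S^{W^{\tilde H}}(C_\infty)$ (resp.\ $S^{W^{\tilde Q}}(C_\infty)$ on $\tilde Q\cong\bF_2$) and a final transitivity argument on $C_\infty\cong\bP^1$. The numbers should work out, since the paper's bundle formula shows $\delta(Y,\tfrac{3}{17}\oS_0)=1$, but as written your proof does not establish the bound for this family of valuations.
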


Using $\bT$-log Fano pairs of complexity $1$ (see e.g. Theorem \ref{thm:complexity1}), one can show that this pair is indeed K-polystable, although K-semistability is enough in proving Theorem \ref{thm:main}.

\begin{proof}
We follow the computation of stability thresholds for projective bundles from \cite{ZZ22} (see Lemma \ref{lem:ZZ}). Let $c\in (0,1)$ be a rational number. Let $\phi:Y\to \bP^2$ be the $\bP^1$-bundle induced by central projection from $p$. Denote by $(V,\Delta):=(\bP^2, cC_0)$ and $\Delta_Y:=\phi^*\Delta$. Then $Y\cong \bP_{V}(L^{-1}\oplus\cO_{V})$ where $L=\cO_{\bP^2}(1)$.  Let $r=3-2c$ so that $L\sim_{\bQ}-r^{-1}(K_V+\Delta)$.
Let  $V_\infty$ be the section of $Y$ at infinity. Thus we have
$c\oS_0 = \Delta_Y + 2c V_\infty$. By Lemma \ref{lem:ZZ} with $a=0$ and $b=2c$, we have
\begin{equation}\label{eq:delta-1}
\delta(Y, c\oS_0) = \min\left\{\frac{(3-2c)\delta(V,\Delta)}{\frac{3}{4}\frac{B^{4}-A^{4}}{B^{3}-A^{3}}}, \frac{1}{\frac{3}{4}\frac{B^{4}-A^{4}}{B^{3}-A^{3}}-A}, \frac{1-2c}{B-\frac{3}{4}\frac{B^{4}-A^{4}}{B^{3}-A^{3}}}\right\},
\end{equation}
where $A=r-(1-a) = 2-2c$ and $B=r+(1-b) = 4-4c$. Thus we have
\begin{equation}\label{eq:delta-2}
\frac{3}{4}\frac{B^4-A^4}{B^3-A^3}= \frac{15}{7}A = \frac{15}{7}(2-2c).
\end{equation}
Combining \eqref{eq:delta-1} and \eqref{eq:delta-2} yields
\begin{equation}\label{eq:delta-3}
\delta(Y, c\oS_0) = \min\left\{\frac{28(3-2c)}{45(2-2c)} \delta(V,\Delta), \frac{28}{17(2-2c)}, \frac{28(1-2c)}{11(2-2c)}\right\}.
\end{equation}
Suppose we take $c=\frac{3}{17}$, then it is easy to see that 
\[
\frac{28(3-2c)}{45(2-2c)} = \frac{28}{17(2-2c)}= \frac{28(1-2c)}{11(2-2c)} =1.
\]
Since $c<\frac{3}{4}$, by \cite[Theorem 1.5]{LS14} (see also \cite{Fuj20}) the log Fano pair $(V,\Delta)=(\bP^2, cC_0)$ is K-polystable, which implies  $\delta(V,\Delta)=1$. Thus \eqref{eq:delta-3} becomes $\delta(Y, \frac{3}{17}\oS_0)=1$ which implies K-semistability of $(Y, \frac{3}{17}\oS_0)$ by \cite{FO16, BJ17}.
\end{proof}

\begin{lem}[cf. {\cite[Theorem 1.3]{ZZ22}}]\label{lem:ZZ}
Let $(V,\Delta)$ be a log Fano pair of dimension $n$. Let $L$ be an ample line bundle on $V$ such that $L\sim_{\bQ} -r^{-1} (K_V+\Delta)$ for some $r\in\bQ_{>0}$. Let $\phi:Y=\bP_V(L^{-1}\oplus \cO_V)\to V$ be a $\bP^1$-bundle. Denote by $\Delta_Y:=\phi^*\Delta$. Let $V_0$ and $V_\infty$ be sections of $Y$ at zero and infinity respectively, so that $\cO_Y(V_0)|_{V_0}\cong L^{-1}$ and $\cO_Y(V_\infty)|_{V_\infty}\cong L$. Let $a,b$ be rational numbers such that $0\leq a<1$, $0\leq b <1$ if $r>1$, and $1-r<a<1$, $0\leq b <1$ if $0<r\leq 1$. Then
\[
\delta(Y,\Delta_Y+aV_0+bV_\infty)= \min\left\{\frac{r \delta(V,\Delta)}{\frac{n+1}{n+2}\frac{B^{n+2} - A^{n+2}}{B^{n+1} - A^{n+1}}}, \frac{1-a}{\frac{n+1}{n+2}\frac{B^{n+2} - A^{n+2}}{B^{n+1} - A^{n+1}} - A}, \frac{1-b}{B - \frac{n+1}{n+2}\frac{B^{n+2} - A^{n+2}}{B^{n+1} - A^{n+1}}}\right\}, 
\]
where $A=r-(1-a)$ and $B=r+(1-b)$.
\end{lem}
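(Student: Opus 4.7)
The plan is to follow the strategy of \cite{ZZ22}: I would partition the divisorial valuations on $Y$ into three types---the sections $V_0$ and $V_\infty$, and pullbacks of divisors from $V$---compute the $S$-invariant in each case, and invoke the Abban--Zhuang fibration method to argue that these three types determine $\delta$.

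First I set up intersection theory on the $\bP^{1}$-bundle. Adjunction along $V_0$ and $V_\infty$ combined with $V_\infty \sim V_0 + \phi^{*}L$ gives the symmetric identity $-K_Y - \Delta_Y = V_0 + V_\infty + r\,\phi^{*}L$, hence
\[
L_Y := -K_Y - \Delta_Y - aV_0 - bV_\infty = (B-A)\,V_\infty + A\,\phi^{*}L
\]
with $A = r-(1-a)$ and $B = r+(1-b)$. Since the nef cone of $Y$ is spanned by $V_\infty$ and $\phi^{*}L$, ampleness of $L_Y$ is equivalent to $A>0$ and $B>A$, matching the hypotheses on $(a,b,r)$.

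Next, the relation $V_0 \cdot V_\infty = 0$ (equivalently $V_\infty^{2} = V_\infty \cdot \phi^{*}L$) yields by induction the slicing formula
\[
\vol_Y(\alpha V_\infty + \phi^{*}M) = (n+1) \int_{0}^{\alpha} \vol_V(M + uL)\,du
\]
whenever $M + uL$ is nef for $u \in [0,\alpha]$. Applying it to $L_Y$ gives $\vol(L_Y) = L^{n}(B^{n+1}-A^{n+1})$, and applying it to $L_Y - tV_0 = (B-A-t)V_\infty + (A+t)\phi^{*}L$ together with its mirror for $V_\infty$, followed by integration over $t \in [0, B-A]$, produces
\[
S(V_0) = \tfrac{n+1}{n+2}\tfrac{B^{n+2}-A^{n+2}}{B^{n+1}-A^{n+1}} - A, \qquad S(V_\infty) = B - \tfrac{n+1}{n+2}\tfrac{B^{n+2}-A^{n+2}}{B^{n+1}-A^{n+1}}.
\]
Dividing the log discrepancies $1-a$ and $1-b$ by these recovers the second and third terms in the minimum. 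For a horizontal $\phi^{*}E$ with $E$ prime over $V$, the slicing formula on a birational model together with the change of variable $\tau = t/s$ in the double integral gives $S_{L_Y}(\phi^{*}E) = \tfrac{1}{r}\cdot\tfrac{n+1}{n+2}\cdot\tfrac{B^{n+2}-A^{n+2}}{B^{n+1}-A^{n+1}} \cdot S_{V,\Delta}(E)$ (using $-K_V-\Delta \sim_\bQ rL$); combined with $A_{Y,\Delta_Y + aV_0 + bV_\infty}(\phi^{*}E) = A_{V,\Delta}(E)$, taking the infimum over $E$ reproduces the first term.

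The main obstacle, and the only step not reducible to explicit volume computation, is to argue that these three families of valuations exhaust those contributing to $\delta$, since a priori the infimum runs over arbitrary divisors over $Y$. I would handle this by invoking the Abban--Zhuang adjunction/fibration theorem used in \cite{ZZ22} along the flag $V_\infty \subset Y$ (and symmetrically $V_0 \subset Y$): it reduces $\delta(Y,\ldots)$ to the minimum of $(1-a)/S(V_0)$, $(1-b)/S(V_\infty)$, and a refined $\delta$-invariant on $V_\infty \cong V$ whose $t$-th slice is the $\bQ$-divisor $(B-t)L$. An explicit unpacking of this refined $\delta$ matches the horizontal contribution already computed, which closes the proof.
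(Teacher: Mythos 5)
Your proposal is correct and follows essentially the same route as the paper, which simply observes that the argument of \cite[Section 3]{ZZ22} goes through verbatim after replacing $V$ and $Y$ by $(V,\Delta)$ and $(Y,\Delta_Y)$; your slicing computations of $\vol$, the resulting $S$-invariants of $V_0$, $V_\infty$, and $\phi^*E$, and the appeal to the Abban--Zhuang adjunction along $V_\infty\subset Y$ to exhaust all valuations are exactly the content of that reference. No gaps.
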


\begin{proof}
The proof is the same as \cite[Section 3]{ZZ22} after replacing  $V$ and $Y$ therein by $(V, \Delta)$ and $(Y, \Delta_Y)$ respectively.
\end{proof}

\begin{cor}\label{cor:2.8a}
Assume the smooth anti-canonical surface $\oS$ of $Y$ satisfies that $\oS\cap E$ is a smooth conic. Then $(Y, \frac{3}{17}\oS)$ is K-semistable.
\end{cor}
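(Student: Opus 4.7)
The plan is a direct application of openness of K-semistability to the special degeneration already constructed in the paragraph preceding Proposition~\ref{prop:2.8a}. Recall that the $1$-PS $\lambda(t)\cdot [x,y,z,w]=[x,y,z,tw]$ together with the blow-up $\pi:Y\to\bP^3$ produces a $\bG_m$-equivariant flat family $(\cY,c\cS)\to\bA^1$ whose fiber over $t\neq 0$ is isomorphic to $(Y,c\oS)$ and whose fiber over $t=0$ is $(Y,c\oS_0)$. Here the total space $\cY=Y\times\bA^1$ is trivial (only the divisor degenerates), so the family is automatically $\bQ$-Gorenstein, and for $c\in(0,1)$ every fiber is a log Fano pair since $-K_Y-c\oS\sim_\bQ(1-c)(-K_Y)$ is ample.

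Specializing to $c=\tfrac{3}{17}$, Proposition~\ref{prop:2.8a} tells us that the central fiber $(Y,\tfrac{3}{17}\oS_0)$ is K-semistable. By the openness of K-semistability in $\bQ$-Gorenstein families of log Fano pairs, established in \cite{BLX19, Xu19}, the locus in $\bA^1$ over which the fiber is K-semistable is Zariski open; since it contains $0$, it must contain some $t_0\neq 0$. By $\bG_m$-equivariance of the family, the fibers over all $t\neq 0$ are mutually isomorphic to $(Y,\tfrac{3}{17}\oS)$. Therefore $(Y,\tfrac{3}{17}\oS)$ is K-semistable, proving the corollary.

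There is essentially no obstacle here: all of the nontrivial work was done in Proposition~\ref{prop:2.8a} (the delta-invariant computation on the $\bP^1$-bundle via Lemma~\ref{lem:ZZ}) and in constructing the degeneration from the $1$-PS $\lambda$. The only thing one needs to double-check is that the degeneration of divisors is genuinely $\bG_m$-equivariant with the advertised central fiber, which is immediate from the equation $f_2w^2+f_3w+f_4=0$ for $S$: applying $\lambda(t)$ and clearing the overall $t^2$ leaves $f_2w^2+t^{-1}f_3w+t^{-2}f_4$, so the flat limit as $t\to 0$ is cut out by $f_2w^2=0$, i.e.\ $S_0$, and hence $\oS_0=\pi_*^{-1}S_0$ on $Y$.
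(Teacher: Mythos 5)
Your proof is correct and takes essentially the same approach as the paper, whose entire argument for this corollary is the one-line observation that $\pi^*\lambda$ gives a special degeneration $(Y,\tfrac{3}{17}\oS)\rightsquigarrow(Y,\tfrac{3}{17}\oS_0)$, so Proposition~\ref{prop:2.8a} plus openness of K-semistability \cite{BLX19,Xu19} concludes. One small slip in your closing verification: since $\lambda(t)_*S$ is cut out by substituting $w\mapsto t^{-1}w$, clearing denominators gives $f_2w^2+tf_3w+t^2f_4$, not $f_2w^2+t^{-1}f_3w+t^{-2}f_4$ (your expression diverges as $t\to 0$ and would instead be the correct normalization for the limit as $t\to\infty$), though your conclusion that the flat limit is $S_0=(f_2w^2=0)$ is right.
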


\begin{proof}
Since $\pi^*\lambda$ induces a special degeneration $(Y, \frac{3}{17}\oS)\rightsquigarrow (Y, \frac{3}{17}\oS_0)$, the statement follows from Proposition \ref{prop:2.8a} and the openness of K-semistability \cite{BLX19, Xu19}.
\end{proof}

\section{\textnumero2.8($\mathrm{b}$)}
We adapt the notation from the first paragraph of Section \ref{sec:2.8a}. 

Throughout this section, we assume that $X$ belongs to subfamily \textnumero2.8(b).
In this case, the anti-canonical surface $\oS$ of $Y$ satisfies that $\oS\cap E$ is a union of two transversal lines. Denote by $S:=\pi_* \oS$ a quartic surface in $\bP^3$. Choose a projective coordinate $[x,y,z,w]$ of $\bP^3$ such that $p=[0,0,0,1]$ and the equation of $S$ is 
\[
S=(xyw^2 + f_3(x,y,z)w + f_4(x,y,z)=0),
\]
where $f_i$ is a degree $i$ homogeneous polynomial in $(x,y,z)$.

\textbf{Claim.} The polynomial $f_3$ has a non-zero $z^3$-term. This is equivalent to saying that $S$ has an $A_2$-singularity at $p$.

We choose an affine coordindate $[x,y,z,1]$ on $\bP^3$ and an affine coordinate $(x_0,x_1,x_2)$ on $Y$ such that $(x,y,z)=(x_0x_2, x_1x_2, x_2)$. Then the equation of $\oS$ in the coordinate $(x_0, x_1, x_2)$ becomes 
\[
x_0 x_1 + f_3(x_0,x_1,1) x_2 + f_4(x_0, x_1, 1) x_2^2 = 0.
\]
Since $\oS$ is smooth at the origin of the coordinate $(x_0, x_1, x_2)$, we conclude that $f_3(0,0,1)\neq 0$ which implies that $f_3$ has a non-zero $z^3$-term. The claim is proved.

Next, after rescaling of $z$ we may assume that the $z^3$-term has coefficient $1$ in $f_3$. Let $\lambda':\bG_m\to \PGL(4)$ be a $1$-PS given by $\lambda'(t)\cdot [x,y,z,w] = [x, y, tz, t^3w]$. Then it is clear that $\lim_{t\to 0} \lambda'(t)_* S = S_0'$ where 
\[
S_0' = (xyw^2 + z^3 w=0).
\]
Let $\oS_0':= \pi_*^{-1} S_0'$. Then  $\pi^*\lambda'$ induces a special degeneration $(Y,c\oS)\rightsquigarrow (Y, c\oS_0')$ for $c\in (0,1)$.

\begin{prop}\label{prop:2.8b}
The log Fano pair $(Y,\frac{2}{9}\oS_0')$ is K-polystable. 
\end{prop}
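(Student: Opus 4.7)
The plan is to recognize $(Y, \tfrac{2}{9}\oS_0')$ as a $\bT$-log Fano pair of complexity one and then apply Theorem \ref{thm:complexity1}. The diagonal subtorus of $\PGL(4)$ fixing $p=[0,0,0,1]$ lifts to a $3$-dimensional torus $\bT_0$ acting on $Y$. Writing its elements as $[x,y,z,w]\mapsto[\alpha x,\beta y,\gamma z,\delta w]$, the stabilizer of $S_0'=(xyw^2+z^3w=0)$ is cut out by the single character relation $\alpha\beta\delta=\gamma^3$; modulo scalars this gives a $2$-dimensional subtorus $\bT\subset\bT_0$. Since $\dim Y-\dim\bT=1$, the pair $(Y,\oS_0')$ is a complexity-one $\bT$-variety.

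Next I would extract the polyhedral divisor encoding $(Y,\oS_0')$ as a complexity-one $\bT$-variety over a rational quotient curve. The relevant $\bT$-invariant prime divisors on $Y$ include the exceptional divisor $E=V_0$, the strict transform $V_\infty$ of $\{w=0\}$, the strict transform $\tilde T$ of the cubic $T=\{xyw+z^3=0\}$, and the strict transforms of the three coordinate planes $\{x=0\},\{y=0\},\{z=0\}$ through $p$. The boundary decomposes as $\oS_0'=V_\infty+\tilde T$, each component with coefficient $\tfrac{2}{9}$. The salient incidence data are: $\tilde T\cap E=\{x=0\}\cup\{y=0\}\subset E\cong\bP^2$; $V_\infty\cap\tilde T$ is the single line $\{z=w=0\}$, along which $\tilde T$ is tangent to $V_\infty$ to order three (since $\tilde T|_{V_\infty}=\{z^3=0\}$); and $\tilde T$ has remaining $A_2$-singularities at $[1:0:0:0]$ and $[0:1:0:0]$.

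Applying Theorem \ref{thm:complexity1} then reduces K-polystability of $(Y,c\oS_0')$ to two checks: (i) vanishing of a combinatorial Futaki invariant along the $1$-parameter subgroup $\mu(t)\cdot[x,y,z,w]=[x,y,z,tw]$, which lies in $\bT_0$ and generates the quotient $\bT_0/\bT$; this produces a linear equation in $c$ whose unique solution should be $c=\tfrac{2}{9}$; and (ii) a K-stability inequality for the polyhedral divisor data, amounting to a $\beta$-type check at the finitely many $\bT$-invariant divisorial (and quasi-monomial) valuations corresponding to vertices of the polyhedral data. Point (i) pins down the privileged coefficient, while point (ii) captures K-polystability at that coefficient.

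The main obstacle is the combinatorial bookkeeping in setting up the Altmann-Hausen-S\"u\ss{} polyhedral divisor: it must correctly encode the third-order tangency between $V_\infty$ and $\tilde T$ along $\{z=w=0\}$, the two remaining $A_2$-singularities of $\tilde T$, and the vertical data from degenerate orbits. A secondary subtlety is verifying the strict (polystable rather than semistable) inequality in point (ii), which requires ruling out nontrivial $\bT$-equivariant degenerations with vanishing Futaki invariant. Once the polyhedral data is in place, however, checking the hypotheses of Theorem \ref{thm:complexity1} becomes a finite and explicit computation.
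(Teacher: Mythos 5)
Your identification of the acting torus is correct and coincides with the paper's: the rank-two subtorus of the diagonal torus cut out by $\alpha\beta\delta=\gamma^3$ is exactly the torus generated by $\lambda'(t)\cdot[x,y,z,w]=[x,y,tz,t^3w]$ and $[x,y,z,w]\mapsto[tx,t^{-1}y,z,w]$, and the overall strategy (complexity-one action plus Theorem \ref{thm:complexity1}) is the same as the paper's. The geometric bookkeeping is also right: $\oS_0'=\oH_w+\oT_1$, the tangency of $\oT_1$ with $\oH_w$ along $(z=w=0)$, and the residual $A_2$-points.

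However, your step (i) contains a genuine error. Condition (3) of Theorem \ref{thm:complexity1} asks for $\Fut_{(Y,\frac{2}{9}\oS_0')}$ to vanish on the cocharacter lattice of $\bT$ itself, i.e.\ on the rank-two lattice of one-parameter subgroups that preserve the pair. The $1$-PS $[x,y,z,w]\mapsto[x,y,z,tw]$ you single out satisfies $\alpha\beta\delta=t\neq 1=\gamma^3$, so it does not preserve $S_0'$, is not a cocharacter of $\bT$, and is irrelevant to condition (3); the valuation it induces is $\ord_E$, and $E$ is a \emph{vertical} $\bT$-invariant divisor, so what is required of it is $\beta(E)>0$, not $\beta(E)=0$. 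Worse, forcing $\beta_{(Y,c\oS_0')}(E)=1-(1-c)\cdot\tfrac{17}{14}=0$ gives $c=\tfrac{3}{17}$ (the coefficient relevant to family \textnumero2.8(a)), not $c=\tfrac{2}{9}$, so the linear equation you propose would pin down the wrong coefficient. The correct normalization comes from Futaki vanishing \emph{inside} $\bT$: for instance along $\lambda_0(t)\cdot[x,y,z,w]=[t^3x,y,tz,w]$, which together with its image under the involution $x\leftrightarrow y$ spans the cocharacter lattice of $\bT$ over $\bQ$, the condition $A_{(Y,c\oS_0')}(v)=S_{(Y,c\oS_0')}(v)$ for the associated monomial valuation $v$ reads $4-3c=(1-c)\cdot\tfrac{30}{7}$, whose unique solution is $c=\tfrac{2}{9}$. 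Finally, the proposal stops short of the actual content of the proof: none of the $\beta$-positivity checks for the vertical divisors $E$, $\oH_w$, $\oH_x$, $\oH_y$, $\oH_z$, $\oT_s$ (note the $\oT_s$ form a one-parameter family rather than finitely many divisors, though linear equivalence reduces them to a single computation) nor the Futaki computation is carried out, and those volume integrals are where the proof actually lives.
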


\begin{proof}
Let $\mu$ be a $1$-PS in $\PGL(4)$ given by $\mu(t)\cdot[x,y,z,w]=[tx, t^{-1}y, z, w]$. Then clearly $\lambda'$ and $\mu$ generates a $\bT=\bG_m^2$-action on $(\bP^3, S_0')$ which lifts through $\pi^*$ to a $\bT$-action on $(Y, \oS_0')$. Since the $\bT$-action on $Y$ is of complexity $1$, by Theorem \ref{thm:complexity1} we only need to check $\beta_{(Y,\frac{2}{9}\oS_0')}(F)$ and $\Fut_{(Y,\frac{2}{9}\oS_0')}$. 

We first show that $\beta_{(Y,\frac{2}{9}\oS_0')}(F)>0$ for every $\bT$-invariant prime divisor $F$ on $Y$. In fact, all $\bT$-invariant prime divisors on $Y$ are vertical from the following classification. Denote by $\oH_x, \oH_y, \oH_z,\oH_w$ the strict transform of the coordinate hyperplanes of $\bP^3$ to $Y$. A straightforward analysis of the $\bT$-action on $\bP^3$ shows that every $\bT$-invariant prime divisor on $Y$ belongs to one of the following classes:
\begin{enumerate}[label=(\roman*)]
    \item $E$;
    \item $\oH_w$;
    \item $\oH_x$, $\oH_y$, $\oH_z$;
    \item $\oT_s:=\pi_*^{-1} T_s$ where $T_s:=(xyw + sz^3=0)\subset\bP^3$ for $s\neq 0$.
\end{enumerate}

Next, we split into four cases according to the above classes. We will frequently use the equality 
\[
S_{(Y, cD)}(F)= (1-c) S_Y(F)
\]
for an anti-canonical surface $D\subset Y$ and $c\in (0,1)$ which follows from $-K_Y-cD\sim_{\bQ} (1-c)(-K_Y)$ and \cite[Lemma 3.7(i)]{BJ17}.

(i) It is clear that $A_{(Y, \frac{2}{9}\oS_0')}(E)=A_Y(E) = 1$. Next we compute $S_Y(E)$. We know that $-K_Y \sim 4\oH_w - 2E$ where $\oH_w\sim \pi^*\cO_{\bP^3} (1)$. The pseudoeffective cone of $Y$ is generated by $\oH_x\sim \oH_w - E$ and $E$. The nef cone of $Y$ is generated by $\oH_w$ and $\oH_x$. Thus we have $-K_Y-tE\sim 4\oH_w -(2+t)E$ is nef if $0\leq t\leq 2$, and not big if $t\geq 2$. Thus
\begin{align*}
S_Y(E) &  = \frac{1}{(-K_Y)^3} \int_0^\infty \vol(-K_Y-tE) dt \\
& = \frac{1}{56} \int_0^2 (4\oH_w - (2+t) E)^3 dt\\
& = \frac{1}{56} \int_0^2 (4^3 - (2+t)^3) dt = \frac{17}{14}.
\end{align*}
As a consequence,
\[
\beta_{(Y,\frac{2}{9}\oS_0')}(E) = 1 - \frac{7}{9}\cdot \frac{17}{14}=\frac{1}{18} >0.
\]

(ii) We have  $A_{(Y, \frac{2}{9}\oS_0')}(\oH_w)=A_Y(\oH_w)-\frac{2}{9}\ord_{\oH_w}(\oS_0') = \frac{7}{9}$. Next we compute $S_Y(\oH_w)$. It is clear that $-K_Y-t\oH_w = (4-t)\oH_w - 2E $ is nef if $0\leq t\leq 2$, and not big if $t\geq 2$. Thus 
\begin{align*}
S_Y(\oH_w) &  = \frac{1}{(-K_Y)^3} \int_0^\infty \vol(-K_Y-t\oH_w) dt \\
& = \frac{1}{56} \int_0^2 ((4-t)\oH_w - 2 E)^3 dt\\
& = \frac{1}{56} \int_0^2 ((4-t)^3 - 2^3) dt = \frac{11}{14}.
\end{align*}
As a consequence, 
\[
\beta_{(Y,\frac{2}{9}\oS_0')}(\oH_w) = \frac{7}{9} - \frac{7}{9}\cdot \frac{11}{14}=\frac{1}{6} >0.
\]

(iii) Since $\oH_x\sim \oH_y\sim\oH_z$, their $S$-invariants are the same. As neither of these three divisors is  contained in $\Supp(\oS_0')$, their log discrepancies with respect to $(Y, \frac{2}{9}\oS_0')$ are the same which is $1$. It suffices to show that $\beta(\oH_x)>0$ as these three divisors have the same $\beta$-invariant. Next we compute $S_Y(\oH_x)$. The divisor $-K_Y - t\oH_x = (4-t)\oH_w - (2-t) E$ is nef if $0\leq t\leq 2$ and not big if $t\geq 4$. When $2< t< 4$, it admits a Zariski decomposition
\[
-K_Y - t \oH_x = (4-t)\oH_w + (t-2) E,
\]
which implies that $\vol(-K_Y - t\oH_x) = ((4-t)\oH_w)^3$ when $2\leq t\leq 4$. Thus we have
\begin{align*}
S_Y(\oH_x) &  = \frac{1}{(-K_Y)^3} \int_0^\infty \vol(-K_Y-t\oH_x) dt \\
& = \frac{1}{56}\left( \int_0^2 ((4-t)\oH_w - (2-t) E)^3 dt +\int_2^4 ((4-t)\oH_w)^3 dt\right)\\
& = \frac{1}{56} \left( \int_0^2 ((4-t)^3 - (2-t)^3) dt +\int_2^4 (4-t)^3 dt\right) = \frac{15}{14}.
\end{align*}
As a consequence,
\[
\beta_{(Y,\frac{2}{9}\oS_0')}(\oH_x) = 1 - \frac{7}{9}\cdot \frac{15}{14}=\frac{1}{6} >0.
\]

(iv) Since $\oT_s \sim 3\oH_w - 2E$ for every $s\neq 0$, their $S$-invariant are the same. We also have $\ord_{\oT_s}(\oS_0')\leq 1$, which implies $A_{(Y, \frac{2}{9}\oS_0')}(\oT_s)\geq \frac{7}{9}$. Next we compute $S_Y(\oT_s)$. The divisor $-K_Y-t\oT_s=(4-3t)\oH_w - (2-2t)E$ is nef if $0\leq t\leq 1$ and not big if $t\geq \frac{3}{4}$. When $1<t<\frac{3}{4}$, it admits a Zariski decomposition
\[
-K_Y - t \oT_s = (4-3t)\oH_w + (2t-2) E,
\]
which implies that $\vol(-K_Y - t\oH_x) = ((4-3t)\oH_w)^3$ when $1\leq t\leq\frac{4}{3}$. Thus we have
\begin{align*}
S_Y(\oT_s) &  = \frac{1}{(-K_Y)^3} \int_0^\infty \vol(-K_Y-t\oT_s) dt \\
& = \frac{1}{56}\left( \int_0^1 ((4-3t)\oH_w - (2-2t) E)^3 dt +\int_1^{\frac{4}{3}} ((4-3t)\oH_w)^3 dt\right)\\
& = \frac{1}{56} \left( \int_0^1 ((4-3t)^3 - (2-2t)^3) dt +\int_1^{\frac{4}{3}} (4-3t)^3 dt\right) = \frac{29}{84}.
\end{align*}
As a consequence,
\[
\beta_{(Y,\frac{2}{9}\oS_0')}(\oT_s) \geq \frac{7}{9} - \frac{7}{9}\cdot \frac{29}{84}=\frac{55}{108} >0.
\]

So far we verified that $\beta_{(Y,\frac{2}{9}\oS_0')}(F)>0$ for every $\bT$-invariant prime divisor $F$ on $Y$. It remains to show that $\Fut_{(Y,\frac{2}{9}\oS_0')}=0$ on the cocharacter lattice $N=\Hom(\bG_m,\bT)$ of $\bT$. Let $\lambda_0$ and $\lambda_1$ be two $1$-PS in $\PGL(4)$ given by 
\[
\lambda_0(t)\cdot [x,y,z,w]=[t^3x, y, tz, w] \quad\textrm{and}\quad \lambda_1(t)\cdot[x,y,z,w]=[x, t^3y, tz, w].
\]
Then it is not hard to see that $\lambda_0$ and $\lambda_1$ form a basis of $N_{\bQ}:=N\otimes_{\bZ}\bQ$. Meanwhile, the involution $\tau: \bP^3\to \bP^3$ defined by $[x,y,z,w]\mapsto [y,x,z,w]$ induces an involution $\pi^*\tau \in \Aut(Y, \oS_0')$ such that $\tau\lambda_0\tau^{-1} = \lambda_1$. Thus it suffices to show that $\Fut_{(Y,\frac{2}{9}\oS_0')}(\pi^*\lambda_0)=0$ as this implies $\Fut_{(Y,\frac{2}{9}\oS_0')}(\pi^*\lambda_1)=0$ and hence the vanishing of $\Fut_{(Y,\frac{2}{9}\oS_0')}$ on the entire $N_{\bQ}$.

Let $v$ be the monomial divisorial valuation on $\bP^3$ centered at $(x=z=0)$ such that $v(x)=3$ and $v(z)=1$. Then it is clear that $\lambda_0$ is the $1$-PS induced by $v$. As abuse of notation we also denote by $v$ the lifting valuation $\pi^*v$ on $Y$. According to \cite[Theorem 5.1]{Fuj16}, we have $\Fut_{(Y,\frac{2}{9}\oS_0')}(\lambda_0)=\beta_{(Y,\frac{2}{9}\oS_0')}(v)$. Thus it suffices to show $\beta(v)=0$. Since $\pi: Y\to \bP^3$ is isomorphic at the generic point of the center of $v$, we know that 
\begin{equation}\label{eq:Av}
A_{(Y,\frac{2}{9}\oS_0')}(v) = A_{(\bP^3, \frac{2}{9}S_0')}(v) = A_{\bP^3}(v) - \frac{2}{9}\cdot v(S_0')= 4-\frac{2}{9}\cdot 3 = \frac{10}{3}.    
\end{equation}
Next, we compute $S_Y(v)$. Let $F_0$ be the exceptional divisor of the $(3,0,1)$-weighted blow up in the affine $(x,y,z)$ with $w=1$. Then it is clear that $v=\ord_{F_0}$. Thus we have $\vol(-K_Y-tv) = \vol(\cO_{\bP^3}(4)-2E - tF_0)$. As both $E$ and $F_0$ are toric divisors over $\bP^3$, we have 
\[
\vol(\cO_{\bP^3}(4)-2E - tF_0) = 3!\cdot\vol(P_t), 
\]
where 
\[
P_t := \{(u_0,u_1,u_2)\in \bR_{\geq 0}^3\mid 2\leq u_0+u_1+u_2\leq 4\textrm{ and }3u_0+u_2\geq t\}.
\]
Let 
\[
Q_t := \{(u_0,u_1,u_2)\in \bR_{\geq 0}^3\mid u_0+u_1+u_2\leq 1\textrm{ and }3u_0+u_2\geq t\}.
\]
Then it is clear that $\vol(P_t) = 4^3 \vol(Q_{\frac{t}{4}}) - 2^3 \vol(Q_{\frac{t}{2}})$. Using convex geometry it is not hard to show that 
\[
\vol(Q_t) = \begin{cases}
\frac{1}{6} -\frac{1}{6}t^2 + \frac{2}{27}t^3 & \textrm{if }0\leq t\leq 1;\\
\frac{1}{108}(3-t)^3 & \textrm{if } 1\leq t\leq 3;\\
0 & \textrm{if }t\geq 3.
\end{cases}
\]
Computation shows that $\int_0^3 \vol(Q_t) dt= \frac{1}{6}$.
Thus we have 
\begin{align*}
    \int_0^\infty \vol(-K_Y - tv) dt & = \int_0^\infty 6\vol(P_t) dt \\
    & = \int_0^{12}6\cdot 4^3 \vol(Q_{\frac{t}{4}}) dt - \int_0^6 6\cdot 2^3 \vol(Q_{\frac{t}{2}}) dt \\
    & = 6\cdot(4^4-2^4) \int_0^3 \vol(Q_t)dt = 240.
\end{align*}
As a result, we have
\begin{equation}\label{eq:Sv}
    S_{(Y,\frac{2}{9}\oS_0')}(v) = \frac{7}{9}S_Y(v) = \frac{7}{9}\cdot \frac{1}{56}\int _0^\infty \vol(-K_Y - tv) dt = \frac{7}{9}\cdot \frac{1}{56}\cdot 240 = \frac{10}{3}.
\end{equation}
Combining \eqref{eq:Av} and \eqref{eq:Sv}, we get $\beta_{(Y,\frac{2}{9}\oS_0')}(v)=\frac{10}{3}-\frac{10}{3}=0$. Thus the proof is finished.
\end{proof}

The following theorem is a logarithmic version of a result in \cite{ACC+} which originated from \cite{IS17}. There is little change to the proof so we omit it.
\begin{thm}[cf. {\cite[Theorem 1.3.9]{ACC+}}]\label{thm:complexity1} Let $(X,\Delta)$ be a log Fano pair with an algebraic torus $\bT$-action of complexity $1$. Then $(X,\Delta)$ is K-polystable if and only if all of the following conditions hold.
\begin{enumerate}
    \item $\beta_{(X,\Delta)}(F)>0$ for every vertical $\bT$-invariant prime divisor $F$ on $X$;
    \item $\beta_{(X,\Delta)}(F)=0$ for every horizontal $\bT$-invariant prime divisor $F$ on $X$;
    \item $\Fut_{(X,\Delta)}=0$ on the cocharacter lattice of $\bT$.
\end{enumerate}

\end{thm}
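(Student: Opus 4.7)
The plan is to follow the proof of \cite[Theorem 1.3.9]{ACC+}, which itself is based on \cite{IS17}, and track how the $\bT$-invariant boundary $\Delta$ enters each step. Since any algebraic torus action on a log Fano pair preserves the boundary, $\Delta$ is automatically $\bT$-invariant, and the combinatorial framework of complexity-$1$ $\bT$-varieties (polyhedral divisors over the rational quotient curve $C=\bP^1$) used in \cite{IS17} is insensitive to this change. Only the log discrepancy $A_{(X,\Delta)}$ and the $S$-invariant $S_{(X,\Delta)}$ replace their non-logarithmic counterparts throughout the computation.

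First I would reduce K-polystability of $(X,\Delta)$ to $\bT$-equivariant K-polystability via the equivariant K-stability theorem, which holds for log Fano pairs with identical proof. Next, using the polyhedral divisor description of $X$ over the rational quotient curve $C=\bP^1$, I would classify $\bT$-equivariant special test configurations of $(X,\Delta)$ into three combinatorial types: (a) those induced by a cocharacter $\xi$ of $\bT$, which are product-type with generalized Donaldson-Futaki invariant equal to $\Fut_{(X,\Delta)}(\xi)$; (b) those induced by a horizontal $\bT$-invariant prime divisor $F$, which after twisting by a suitable $1$-PS in $\bT$ become $\bT$-equivariantly product-type, with generalized Donaldson-Futaki invariant proportional to $\beta_{(X,\Delta)}(F)$; (c) those induced by a vertical $\bT$-invariant prime divisor $F$, which are genuinely non-product and whose generalized Donaldson-Futaki invariant is again proportional to $\beta_{(X,\Delta)}(F)$.

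The valuative criterion for K-polystability \cite{Fuj16, BJ17} then translates into: strict positivity of $\beta_{(X,\Delta)}$ on every vertical $\bT$-invariant prime divisor (the genuine non-product destabilizers), vanishing of $\beta_{(X,\Delta)}$ on every horizontal $\bT$-invariant prime divisor (forced by the equality case of the product-type degenerations they produce), and vanishing of $\Fut_{(X,\Delta)}$ on the entire cocharacter lattice of $\bT$ (from the product-type degenerations induced by the torus itself). These are exactly the three conditions in the statement. The main technical point, even in the non-logarithmic setting, is the identification in type (b): showing that the test configuration associated to a horizontal $\bT$-invariant prime divisor is equivariantly trivialized by a $1$-PS in $\bT$. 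This is a structural assertion about the polyhedral divisor geometry that is unaffected by the presence of $\Delta$, which only alters the numerical formulas for $A$ and $S$. This is why only a formal modification of the proof in \cite{ACC+, IS17} is needed in the logarithmic setting.
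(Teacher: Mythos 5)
Your proposal is correct and matches the paper's approach exactly: the paper itself omits the proof, stating only that the argument of \cite[Theorem 1.3.9]{ACC+} (originating in \cite{IS17}) carries over with little change, and your sketch is precisely that argument with the boundary $\Delta$ tracked through the log discrepancy and $S$-invariant. Your identification of which $\bT$-invariant divisors produce product-type versus genuinely non-product equivariant special test configurations is consistent with the trichotomy in the statement, so no further elaboration is needed.
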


\begin{cor}\label{cor:2.8b}
Assume the smooth anti-canonical surface $\oS$ of $Y$ satisfies that $\oS\cap E$ is a union of two transversal lines. Then $(Y, \frac{2}{9}\oS)$ is K-semistable.
\end{cor}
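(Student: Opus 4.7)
The plan is to apply the same strategy that proved Corollary \ref{cor:2.8a}, this time using the $1$-PS $\lambda'$ constructed at the start of Section 3. First I would invoke the special degeneration $\pi^*\lambda':(Y, c\oS)\rightsquigarrow (Y, c\oS_0')$, valid for every $c\in(0,1)$; specializing to $c=\tfrac{2}{9}$ produces a test configuration of the log Fano pair $(Y,\tfrac{2}{9}\oS)$ whose central fiber is $(Y,\tfrac{2}{9}\oS_0')$.

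Next, Proposition \ref{prop:2.8b} shows that $(Y,\tfrac{2}{9}\oS_0')$ is K-polystable, hence in particular K-semistable. Finally, by the openness of K-semistability in $\bQ$-Gorenstein families of log Fano pairs \cite{BLX19, Xu19}, K-semistability of the central fiber of the test configuration propagates to the generic fiber, yielding K-semistability of $(Y,\tfrac{2}{9}\oS)$.

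The substantive content has already been packaged into Proposition \ref{prop:2.8b} (which in turn rests on the complexity-one criterion of Theorem \ref{thm:complexity1} and the detailed case analysis of $\bT$-invariant divisors), so the only thing to check in this corollary is that the construction of $\lambda'$ at the top of the section really produces a \emph{special} test configuration, i.e.\ that the family $(\cY,c\cS')\to \bA^1$ obtained by lifting $\lambda'$ along $\pi$ has irreducible reduced central fiber $(Y,c\oS_0')$. This is immediate from the explicit formula $\lim_{t\to 0}\lambda'(t)_*S=S_0'$ and the fact that $\pi$ is equivariant, so there is no real obstacle — the proof is a one-line citation of Proposition \ref{prop:2.8b} together with \cite{BLX19, Xu19}.
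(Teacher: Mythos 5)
Your proposal is correct and matches the paper's own proof essentially verbatim: both use the special degeneration $(Y,\tfrac{2}{9}\oS)\rightsquigarrow (Y,\tfrac{2}{9}\oS_0')$ induced by $\pi^*\lambda'$, cite Proposition \ref{prop:2.8b} for K-(poly)stability of the central fiber, and conclude via openness of K-semistability \cite{BLX19, Xu19}.
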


\begin{proof}
Since $\pi^*\lambda'$ induces a special degeneration $(Y, \frac{2}{9}\oS)\rightsquigarrow (Y, \frac{2}{9}\oS_0')$, the statement follows from Proposition \ref{prop:2.8b} and the openness of K-semistability \cite{BLX19, Xu19}.
\end{proof}

\begin{proof}[Proof of Theorem \ref{thm:main}]
Let $\sigma:X\to Y$ be the double cover branched along a smooth anti-canonical surface $\oS\subset Y$. By \cite[Theorem 1.3]{Der16}, \cite[Theorem 1.2]{LZ22}, and \cite[Corollary 4.13]{Zhu20}, it suffices to show K-stability of $(Y,\frac{1}{2}\oS)$ as $\Aut(X)$ is finite according to \cite[Lemma 12.4]{CPS19}. By \cite[Theorem 2.10]{ADL21} (see also \cite[Corollary 1]{JMR16}), we know that $(Y, (1-\epsilon)\oS)$ is K-stable for $0<\epsilon\ll 1$. Combining Corollaries \ref{cor:2.8a} and \ref{cor:2.8b}, we know that $(Y, c\oS)$ is K-semistable for some $c\in (0, \frac{1}{2})$ (more precisely, $c=\frac{3}{17}$ in family \textnumero2.8(a) or $c=\frac{2}{9}$ in family \textnumero2.8(b)). Thus the interpolation of K-stability \cite[Proposition 2.13]{ADL19} implies that $(Y, \frac{1}{2}\oS)$ is K-stable. The existence of K\"ahler-Einstein metrics follows from \cite{CDS15, Tia15}. Thus the proof is finished.
\end{proof}

\begin{rem}
Our arguments can give some K-polystable and K-semistable singular members in the family \textnumero2.8 as well. If a quartic surface $S\subset\bP^3$ has an $A_1$ or $A_2$-singularity at $p$ and is canonical (resp. semi-log-canonical) elsewhere, then similar arguments show that $(Y, \frac{1}{2}\oS)$ is K-stable (resp. K-semistable) which implies that the double cover $X\to (Y,\frac{1}{2}\oS)$ 
is K-polystable (resp. K-semistable).

It is an interesting problem to describe the boundary of the K-moduli  compactification of all smooth Fano threefolds in the family \textnumero2.8. For comparison, see \cite[Theorem 1.4]{ADL21} where a complete description of the K-moduli compactification of quartic double solids is given.
\end{rem}

\bibliographystyle{alpha}
\bibliography{ref}

\end{document}